\declaretheorem[numberlike=equation]{theorem}
\declaretheorem[name=Lemma, sibling=theorem]{lemma}
\declaretheorem[name=Proposition, sibling=theorem]{proposition}
\declaretheorem[name=Fact, sibling=theorem]{fact}
\declaretheorem[name=Definition, style=definition, sibling=theorem]{definition}
\declaretheorem[name=Notation, style=definition, sibling=theorem]{notation}
\declaretheorem[name=Remark, style=definition, sibling=theorem]{remark}
\DeclareMathOperator{\Hom}{Hom}
\DeclareMathOperator{\Res}{Res}
\DeclareMathOperator{\Ind}{Ind}
\newcommand{\alp}{{\pmb{\bm\alpha}}}
\newcommand{\be}{{\pmb{\bm\beta}}}
\newcommand{\CC}{{\mathcal{C}}}
\newcommand{\F}{{\mathbb{F}}}
\newcommand{\FI}{{\mathrm{FI}}}
\newcommand{\GL}{{\mathrm{GL}}}
\newcommand{\kk}{{\Bbbk}}
\newcommand{\la}{{\pmb{\bm\lambda}}}
\newcommand{\muu}{{\pmb{\bm\mu}}}
\newcommand{\nuu}{{\pmb{\bm\nu}}}
\newcommand{\PP}{{\mathcal{P}}}
\newcommand{\VI}{{\mathrm{VI}}}
\newcommand{\VIC}{{\mathrm{VIC}}}
\newcommand{\Z}{{\mathbb{Z}}}
\newcommand{\ZZ}{{\mathrm{ZZ}}}
\newcommand{\vdotrows}[2][-15pt]{%
  \multirow{#2}{*}{%
    \vbox {%
      \baselineskip = \dimexpr 2pt\relax 
      \multiply\baselineskip by #2\relax
      \advance\baselineskip by 2pt\relax
      \lineskiplimit = 0pt\relax
      \kern 6pt\relax
      \vskip #1\relax%
      \hbox {.}\hbox {.}\hbox {.}}%
  }%
}
\title[Stable decompositions of certain representations]{Stable decompositions of certain representations of the finite general linear groups}
\author{Wee Liang Gan}
\address{Department of Mathematics, University of California, Riverside, CA 92521, USA}
\email{wlgan@math.ucr.edu}
\author{John Watterlond}
\address{Department of Mathematics, University of California, Riverside, CA 92521, USA}
\email{john.watterlond@gmail.com}
\begin{document}

\begin{abstract}
We prove that the irreducible decomposition of the permutation representation of $\GL_n(\F_q)$ on $\GL_n(\F_q)/\GL_{n-m}(\F_q)$ stabilizes for large $n$. We deduce, as a consequence, a representation stability theorem for finitely generated VIC-modules.
\end{abstract}

\maketitle

\section{Introduction} \label{introduction}

Let $\kk$ be an algebraically closed field of characteristic zero and $\F_q$ a finite field of order $q$. Denote by $G_n$ the finite general linear group $\GL_n(\F_q)$. For $r<n$, we consider $G_r$ as a subgroup of $G_n$ in the standard way, and write $\kk[G_n/G_r]$ for the permutation representation of $G_n$ on the set $G_n/G_r$. In this paper, we show that the collection of multiplicities which occur in the irreducible decomposition of $\kk[G_n/G_{n-m}]$ as a representation of $G_n$ is independent of $n$ for $n\geqslant 3m$. We prove, in fact, a more precise theorem that describes how the irreducible decomposition is independent of $n$ for $n\geqslant 3m$. To state the theorem, we need to recall the parametrization of the irreducible representations of $G_n$ over $\kk$.

The irreducible representations of $G_n$ were found by Green \cite{Green}. Let $\CC_n$ be the set of cuspidal irreducible representations of $G_n$ (up to isomorphism), and let $\CC = \sqcup_{n\geqslant 1} \CC_n$. We set $\mathrm{d}(\rho)=n$ if $\rho\in \CC_n$. By a partition, we mean a non-increasing sequence of non-negative integers $(\lambda_1, \lambda_2, \ldots)$ where only finitely many terms are non-zero. If $\lambda=(\lambda_1, \lambda_2, \ldots)$ is a partition, we set $|\lambda|=\lambda_1 + \lambda_2 + \cdots$. Let $\PP$ be the set of partitions. For any function $\muu: \CC \to \PP$, let
\begin{equation*}
\| \muu \| = \sum_{\rho\in\CC} \mathrm{d}(\rho) |\muu(\rho)| . 
\end{equation*}
It is well-known (see, for example, \cite{SZ} or \cite{Zelevinsky}) that there is a natural parametrization of the isomorphism classes of irreducible representations of $G_n$ by functions $\muu: \CC \to \PP$ such that $\| \muu \| = n$. We write $\varphi(\muu)$ for the irreducible representation of $G_n$ parametrized by $\muu$.

Let $\iota\in\CC_1$ be the trivial representation of $G_1$. Suppose $\la: \CC \to \PP$ is a function and $\la(\iota) = (\lambda_1,\lambda_2, \ldots)$. If $n$ is an integer $\geqslant \|\la\| + \lambda_1$, we define the function $\la[n] : \CC \to \PP$ with $\|\la[n]\|=n$ by 
\begin{equation*}
\la[n](\rho) = \left\{ \begin{array}{ll}
(n-\|\la\|, \lambda_1, \lambda_2, \ldots) & \mbox{ if } \rho=\iota, \\
\la(\rho) & \mbox{ if } \rho\neq\iota.
\end{array} \right.
\end{equation*}
It is plain that for each function $\muu: \CC\to\PP$ with $\|\muu\| < \infty$, there exists a unique function $\la: \CC\to\PP$ such that $\muu=\la[n]$ where $n=\|\muu\|$.

\begin{definition}
Suppose $V_m, V_{m+1}, \ldots$ is a sequence where each $V_n$ is a representation of $G_n$ over $\kk$. We say that the sequence $\{V_n\}$ is \emph{multiplicity stable} if there exists an integer $N\geqslant m$ such that  in the irreducible decomposition
\begin{equation*}
V_n  = \bigoplus_{\la} \varphi(\la[n])^{\oplus c_n(\la)} \quad\quad \mbox{ (where $0\leqslant c_n(\la) \leqslant \infty$)},
\end{equation*}
the multiplicities $c_n(\la)$ do not depend on $n$ for $n\geqslant N$; in particular, for any $\la$ such that $\la[N]$ is not defined, one has $c_n(\la)=0$ for every $n\geqslant N$. We call the smallest such $N$ the \emph{multiplicity stability degree} of $\{V_n\}$.
\end{definition}

\begin{theorem} \label{main theorem}
Fix a non-negative integer $m$, and let $V_n = \kk[G_n/G_{n-m}]$ for $n\geqslant m$. The sequence $\{V_n\}$ is multiplicity stable with multiplicity stability degree $\leqslant 3m$.
\end{theorem}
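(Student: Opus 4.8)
The plan is to produce an explicit decomposition of $V_n=\kk[G_n/G_{n-m}]$ and then read off multiplicity stability from the combinatorics of Zelevinsky's Hopf algebra. (By Frobenius reciprocity $c_n(\la)=\dim\varphi(\la[n])^{G_{n-m}}$, but I would compute the decomposition directly rather than these fixed-point dimensions.) \emph{Step 1: reduction to an induced module from a Levi subgroup.} The coset space $G_n/G_{n-m}$ is $G_n$-equivariantly identified with the set of $\VIC$-morphisms $\F_q^m\to\F_q^n$, i.e. pairs $(f,C)$ where $f\colon\F_q^m\hookrightarrow\F_q^n$ is injective and $C$ is a complement of $\mathrm{im}(f)$. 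The $G_n$-equivariant map $(f,C)\mapsto(C,\mathrm{im}(f))$ to the set of pairs of complementary subspaces of dimensions $n-m$ and $m$ has fibres that are torsors under $\GL_m$ (acting by precomposition), and the stabilizer of such a pair is the Levi subgroup $L=G_{n-m}\times G_m$ of $G_n$. Hence
\[
\kk[G_n/G_{n-m}]\ \cong\ \Ind_{L}^{G_n}\bigl(\kk\boxtimes\kk[G_m]\bigr),
\]
with $\kk$ the trivial representation of $G_{n-m}$ and $\kk[G_m]$ the regular representation of $G_m$.

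\emph{Step 2: Mackey decomposition through the parabolic.} Let $P\supseteq L$ be the parabolic with unipotent radical $U\cong\mathrm{Mat}_{(n-m)\times m}(\F_q)$. Using $\Ind_L^{G_n}=\Ind_P^{G_n}\circ\Ind_L^P$ together with $\Ind_L^P(\kk\boxtimes\kk[G_m])\cong\kk[U]\otimes\mathrm{Infl}_L^P(\kk\boxtimes\kk[G_m])$ ($U$ acting on $\kk[U]$ by translation, $L$ by conjugation), and decomposing $\kk[U]=\bigoplus_{r=0}^{m}\kk[\mathcal O_r]$ according to the rank $r$ of an additive character of $U$ (the $L$-orbits on $\widehat U$, valid for $n\geqslant 2m$), one gets $\kk[G_n/G_{n-m}]=\bigoplus_{r=0}^m W_r$. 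Here $W_0=R^{G_n}_L(\kk\boxtimes\kk[G_m])$ is a Harish--Chandra induction, and each $W_r$ ($r\geqslant1$) is induced from $P_r=U\rtimes\mathrm{Stab}_L(\psi_r)$. Computing $\mathrm{Stab}_L(\psi_r)$ shows $P_r$ lies in the parabolic with Levi $G_{n-m-r}\times G_{m+r}$, with $G_{n-m-r}$ acting trivially on the inducing datum; unwinding the inflations and inductions identifies $W_r$ with $R^{G_n}_{G_{n-m-r}\times G_{m+r}}(\kk\boxtimes A_r)$ for some representation $A_r$ of $G_{m+r}$ independent of $n$ (with $A_0=\kk[G_m]$).

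\emph{Step 3: translation to symmetric functions.} In Zelevinsky's isomorphism $\bigoplus_n R(G_n)\cong\bigotimes_{\rho\in\CC}\Lambda^{(\rho)}$, Harish--Chandra induction is multiplication, the trivial representation of $G_k$ is the complete homogeneous symmetric function $h_k\in\Lambda^{(\iota)}$, and $A_r$ corresponds to $\sum_{\|\muu\|=m+r}a_{\muu}\prod_\rho s^{(\rho)}_{\muu(\rho)}$ with $a_\muu\geqslant0$. Thus $W_r$ corresponds to $\sum_\muu a_\muu\, h_{n-m-r}\cdot\prod_\rho s^{(\rho)}_{\muu(\rho)}$, and Pieri's rule expands $h_{n-m-r}\cdot s_{\muu(\iota)}$ as a sum of Schur functions $s_\nu$ over $\nu$ obtained from $\muu(\iota)$ by adding a horizontal strip of size $n-m-r$. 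Reading off the associated $\varphi(\la[n])$, one has $\nu_1=n-\|\la\|\geqslant n-m-r$, so $\|\la\|\leqslant m+r\leqslant 2m$ for every constituent; and for fixed $\la$ the multiplicity of $\varphi(\la[n])$ in $W_r$ is a sum over the finitely many admissible partitions $\muu(\iota)$, in which the only $n$-dependent constraint is $\muu(\iota)_1\leqslant n-\|\la\|$. Hence the multiplicity is eventually constant, and $\varphi(\la[n])$ cannot occur once $\la[n]$ is undefined; this gives multiplicity stability.

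\emph{The main obstacle} is to bring the stability degree down to $3m$. The estimates above only give a bound of order $4m$: the constraint $\muu(\iota)_1\leqslant n-\|\la\|$ becomes vacuous only once $n-\|\la\|$ exceeds the largest admissible $\muu(\iota)_1$, and a priori $\muu(\iota)_1$ could be as large as $\|\muu\|=m+r$. The extra room should come from the fact that imposing the rank-$r$ character $\psi_r$ on the inducing datum is a degenerate Whittaker condition, forcing each constituent $\varphi(\muu)$ of $A_r$ to place at least $\max(0,2r-m)$ of its total weight on cuspidals other than $\iota$, equivalently $|\muu(\iota)|\leqslant 2m-r$; granting this, the Pieri constraints are all vacuous once $n\geqslant 3m$. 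Establishing this genericity estimate — by computing $\Res A_r$ through Mackey, or via the Bernstein--Zelevinsky/Gelfand--Graev analysis of $\GL$-orbits of characters of $\mathrm{Mat}_{(n-m)\times m}(\F_q)$ — is the crux of the argument. A variant that may streamline the bookkeeping is to induct on $m$ using $\kk[G_n/G_{n-m}]\cong\Ind_{G_{n-1}}^{G_n}\kk[G_{n-1}/G_{(n-1)-(m-1)}]$ and analyzing the effect of $\Ind_{G_{n-1}}^{G_n}$ on a multiplicity-stable sequence.
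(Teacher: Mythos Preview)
Your approach is genuinely different from the paper's, and modulo the details of Step~2 it does establish multiplicity stability --- but, as you yourself identify, it gives only a bound of order $4m$ on the stability degree, not the $3m$ asserted in the theorem. The genericity estimate you would need ($|\muu(\iota)|\leqslant 2m-r$ for every constituent $\varphi(\muu)$ of $A_r$) is neither proved nor obviously true, and the two attacks you sketch are left as suggestions. So as written the proposal does not prove the stated theorem. I would also flag that Step~2 --- the identification $W_r\cong R^{G_n}_{G_{n-m-r}\times G_{m+r}}(\kk\boxtimes A_r)$ with $A_r$ independent of $n$ --- is asserted rather than argued; the stabilizer computation and the passage from $P_r$ to the larger parabolic is not routine.

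The paper's route is considerably more direct and sidesteps the parabolic/Mackey machinery entirely. By Frobenius reciprocity the multiplicity of $\varphi(\muu)$ in $V_n$ equals the multiplicity of the trivial representation $\iota_{n-m}=\varphi(\nuu_{n-m})$ in $\Res^{G_n}_{G_{n-m}}\varphi(\muu)$, and iterating the Thoma--Zelevinsky branching rule $m$ times expresses this as the number of \emph{zigzag paths}
\[
\nuu_{n-m}\stackrel{-}{\longrightarrow}\la^{(1)}\stackrel{+}{\longrightarrow}\muu^{(1)}\stackrel{-}{\longrightarrow}\la^{(2)}\stackrel{+}{\longrightarrow}\cdots\stackrel{+}{\longrightarrow}\muu^{(m)}=\muu,
\]
where each arrow adds or removes at most one box per row at every $\rho\in\CC$. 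For $\ell\geqslant 3m$ the paper then writes down an explicit bijection between such paths for $n=\ell$ and for $n=\ell+1$: add one box to the first row of the $\iota$-component at every node. Injectivity is obvious; surjectivity holds because a simple arrow-count shows that at every node of a path ending at $\la[\ell+1]$ the first part of the $\iota$-component is at least $\ell+1-2m\geqslant m+1$ while the second part is at most $m$, so removing that box still leaves a partition. This yields the $3m$ bound with essentially no further work.

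The reason your filtration loses a factor of $m$ is that it decouples the bound $\|\la\|\leqslant m+r$ from the bound $\muu(\iota)_1\leqslant m+r$, whereas the zigzag description keeps the row-lengths at every node tied to those of $\nuu_{n-m}$; it is precisely this coupling that makes the first-row shift invertible already at $n=3m$.
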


Our motivation to consider the question of multiplicity stability for the sequence in the above theorem comes from representation stability theory, in which a similar phenomenon for representations of symmetric groups is studied in \cite{CEF} and \cite{CF}; see \cite{Farb} for a survey. In particular, an analog of Theorem \ref{main theorem} for symmetric groups was proved by Hemmer \cite[Theorem 2.4]{Hemmer} using Pieri's rule. Our proof of Theorem \ref{main theorem} uses a  branching rule for the finite general linear groups due to Thoma \cite[Satz 2]{Thoma} and Zelevinsky \cite[Corollary 13.8]{Zelevinsky}.

In more detail, suppose $V_0 \stackrel{\phi_0}{\longrightarrow} V_1  \stackrel{\phi_1}{\longrightarrow} \cdots$ is a sequence where each $V_n$ is a representation of $G_n$ over $\kk$ and each $\phi_n$ is a $\kk$-linear map. We say that $\{V_n, \phi_n\}$ is a \emph{consistent sequence} if, for every non-negative integer $n$ and for every $g\in G_n$, the following diagram commutes:
\begin{equation*}
\xymatrix{ V_n \ar[r]^{\phi_n} \ar[d]_{g} & V_{n+1} \ar[d]^{g} \\ V_n \ar[r]_{\phi_n} & V_{n+1} }
\end{equation*}
(where $g$ acts on $V_{n+1}$ by considering it as an element of $G_{n+1}$). In analogy with \cite[Definition 2.6]{CF}, we made the following definition in \cite[Definition 1.5]{GW}.

\begin{definition}
A consistent sequence $\{V_n, \phi_n\}$ is \emph{representation stable} if the following conditions are satisfied:
\begin{itemize}
\item[(RS1)]
For all $n$ sufficiently large, the map $\phi_n : V_n \to V_{n+1}$ is injective.

\item[(RS2)]
For all $n$ sufficiently large, the span of the $G_{n+1}$-orbit of $\phi_n(V_n)$ is all of $V_{n+1}$.

\item[(RS3)]
The sequence $\{V_n\}$ is multiplicity stable.
\end{itemize}
\end{definition}

Let $\VIC$ be the category whose objects are the finite dimensional vector spaces over $\F_q$ and whose morphisms are the pairs $(f,K)$ where $f$ is an injective linear map and $K$ is a complementary subspace to the image of $f$. More precisely, a morphism $X\to Y$ in the category $\VIC$ is the data of an injective linear map $f:X\to Y$ and a subspace $K$ of $Y$ such that $Y$ is the internal direct sum of $f(X)$ and $K$.
The composition of morphisms is defined in the natural way: $(g, L)\circ (f,K) = (g\circ f,\,g(K)\oplus L)$. Observe that every endomorphism in the category $\VIC$ is an automorphism, and the group of automorphisms of $\F_q^{\,n}$ as an object of $\VIC$ is the group $G_n$. 

A \emph{$\VIC$-module} over a commutative ring $R$ is, by definition, a functor from $\VIC$ to the category of $R$-modules. We shall recall the notion of finite generation for $\VIC$-modules in Section \ref{section VIC}.
It should be noted that $\VIC$-modules were first introduced and studied by Putman and Sam in \cite{PS}; they proved, in particular, that finitely generated $\VIC$-modules over a noetherian ring has an inductive description named central stability (see \cite[Theorem E]{PS}). For a finitely generated $\VIC$-module $V$ over $\kk$, our next result gives a description of the asymptotic behaviour of the sequence $\{V(\F_q^{\,n})\}$ as representations of the groups $G_n$. A $\VIC$-module $V$ over $\kk$ defines a consistent sequence $\{V_n, \phi_n\}$ where $V_n=V(\F_q^{\,n})$ and $\phi_n$ is induced by the morphism $(f_n, K_n)$ where $f_n$ is the standard inclusion $\F_q^{\,n} \hookrightarrow \F_q^{\,n+1}$ and $K_n$ is the subspace of vectors in $\F_q^{\,n+1}$ whose first $n$ coordinates are 0.

\begin{theorem} \label{stability for VIC modules}
Let $V$ be a $\VIC$-module over $\kk$ and $\{V_n, \phi_n\}$ the consistent sequence obtained from $V$. Then $V$ is finitely generated if and only if $\{V_n, \phi_n\}$ is representation stable and $\dim(V_n)<\infty$ for every $n$. Moreover, if $V$ is finitely generated, then there exists a polynomial $P\in \mathbb{Q}[T]$ and an integer $N$ such that $\dim(V_n) = P(q^n)$ for all $n\geqslant N$.
\end{theorem}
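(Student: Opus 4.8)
The plan is to pivot everything off the \emph{principal projective} $\VIC$-modules and Theorem~\ref{main theorem}. For $m\geqslant0$ let $P^{(m)}$ be the $\VIC$-module $X\mapsto\kk[\Hom_{\VIC}(\F_q^{\,m},X)]$. Since $G_n=\mathrm{Aut}_{\VIC}(\F_q^{\,n})$ acts transitively on $\Hom_{\VIC}(\F_q^{\,m},\F_q^{\,n})$ with point stabilizer conjugate to $G_{n-m}$, there is a $\kk[G_n]$-module isomorphism $P^{(m)}(\F_q^{\,n})\cong\kk[G_n/G_{n-m}]$; thus Theorem~\ref{main theorem} says precisely that $\{P^{(m)}(\F_q^{\,n})\}_n$ is multiplicity stable. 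One checks directly that $P^{(m)}$, and hence every finite direct sum $\bigoplus_iP^{(m_i)}$, is representation stable: $\phi_n$ is injective because it is post-composition with a monomorphism of $\VIC$ (this gives (RS1)), and the $G_{n+1}$-orbit of its image spans $P^{(m)}(\F_q^{\,n+1})$ because $G_{n+1}$ is transitive on $\Hom_{\VIC}(\F_q^{\,n},\F_q^{\,n+1})$ and every morphism $\F_q^{\,m}\to\F_q^{\,n+1}$ factors through an $n$-dimensional subspace (this gives (RS2)). Finally $\dim P^{(m)}(\F_q^{\,n})=q^{m(n-m)}\prod_{i=0}^{m-1}(q^n-q^i)$ is a polynomial in $q^n$ with rational coefficients.

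For the direction ``representation stable and all $V_n$ finite-dimensional $\Rightarrow$ finitely generated'', only (RS2) and finite-dimensionality are needed. By (RS2) there is an $N$ with $V_{n+1}=\mathrm{span}\bigl(G_{n+1}\cdot\phi_n(V_n)\bigr)$ for every $n\geqslant N$. Because $\Hom_{\VIC}(\F_q^{\,n},\F_q^{\,n+1})$ is a single $G_{n+1}$-orbit containing the morphism that induces $\phi_n$, the degree-$(n+1)$ piece of the sub-$\VIC$-module generated by $V_n$ equals $\mathrm{span}(G_{n+1}\cdot\phi_n(V_n))$; iterating upward from $N$, the sub-$\VIC$-module generated by $\bigoplus_{j\leqslant N}V_j$ is all of $V$, and this generating space is finite-dimensional, so $V$ is finitely generated.

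For the converse, suppose $V$ is finitely generated, so $V$ is a quotient of some $W=\bigoplus_{i=1}^{k}P^{(m_i)}$. Then each $V_n$ is a quotient of $\bigoplus_i\kk[G_n/G_{n-m_i}]$, hence finite-dimensional; and since $W\twoheadrightarrow V$ consists of $G_n$-equivariant maps commuting with the structure maps, (RS2) for $W$ forces (RS2) for $V$. For (RS1): let $T\subseteq V$ be the $\VIC$-torsion submodule; by Noetherianity of $\VIC$-modules over $\kk$ (\cite{PS}) it is finitely generated, and for each generator $v$, say of degree $d$, some morphism annihilates $v$, hence (by the factorization property of $\VIC$-morphisms) so does every morphism out of $\F_q^{\,d}$ with large enough target; thus the submodule generated by $v$, and so $T$ itself, vanishes above a fixed degree, whence for $n$ large every $v\in V_n$ with $\phi_n(v)=0$ is torsion and therefore $0$, which is (RS1). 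For (RS3): using Noetherianity and the central-stability structure of \cite{PS}, reduce to the principal projectives --- concretely, realize $V$ inside a finite complex of $\VIC$-modules built from principal projectives (a finite semi-induced coresolution, in the spirit of that structure theory), so that for $n$ large the class $[V_n]$ in the representation ring of $G_n$ is a $\pm$-combination of classes $[P^{(a)}(\F_q^{\,n})]$; each such class is eventually stable by Theorem~\ref{main theorem}, and multiplicities are additive in exact sequences, so the $c_n(\la)$ are eventually independent of $n$. The ``moreover'' then follows: for $n$ large, $\dim V_n=\sum_\la c(\la)\dim\varphi(\la[n])$ with finitely many nonzero terms, and for each fixed $\la$ the dimension $\dim\varphi(\la[n])$ agrees with a polynomial in $q^n$ over $\mathbb{Q}$ for $n$ large (Green's degree formula \cite{Green}); hence $\dim V_n=P(q^n)$ for a fixed $P\in\mathbb{Q}[T]$ and all large $n$ (alternatively this is immediate from the same finite complex and the polynomial formula for $\dim P^{(m)}(\F_q^{\,n})$ above).

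I expect (RS3) to be the main obstacle; everything else is either elementary or an immediate consequence of Theorem~\ref{main theorem} and the Noetherianity of \cite{PS}. A bare finite presentation $W'\to W\to V\to0$ gives only $[V_n]=[W_n]-[\,\mathrm{im}(W'_n\to W_n)\,]$, in which $[W_n]$ is eventually stable by Theorem~\ref{main theorem} but the image term requires control of the syzygies of $V$; producing from the structure theory of \cite{PS} a genuinely \emph{finite} gadget (a finite complex built from principal projectives) that computes $[V_n]$ for $n$ large --- equivalently, showing that the higher syzygies of $V$ become ``induced'' in high degrees --- is the crux.
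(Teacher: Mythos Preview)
Your treatment of the ``easy'' direction, of (RS2), and of (RS1) via the torsion submodule and Noetherianity is fine (the paper in fact remarks that (RS1) can be obtained this way). The gap you flag in (RS3) is real: the ``finite semi-induced coresolution'' you invoke is not supplied by \cite{PS} for $\VIC$-modules, and central stability does not by itself produce a finite complex of principal projectives computing $[V_n]$ for large $n$. So as written the (RS3) step is a hope rather than an argument.

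The paper sidesteps this entirely by routing through two auxiliary notions from \cite{GW}: \emph{weight boundedness} (there is an $a$ with $\|\la\|\leqslant a$ for every irreducible constituent $\varphi(\la[n])$ of every $V_n$) and \emph{weak stability} (for each $m$, the maps on $H_{m,r}$-coinvariants $(V_{m+r})_{H_{m,r}}\to (V_{m+r+1})_{H_{m,r+1}}$ are eventually bijective). A black box from \cite{GW} (Propositions 4.1, 4.3, 5.2 there) says: weakly stable $+$ weight bounded $\Rightarrow$ (RS1), (RS3), and eventual polynomiality of $\dim V_n$. Both properties are then checked only on the principal projectives and pushed to quotients. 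Weight boundedness is immediate: it passes to subquotients, and Theorem~\ref{main theorem} gives it for $P(m)$ (indeed $\|\la\|\leqslant 2m$ for any constituent). Weak stability for $P(m)$ is a concrete double-coset computation showing $L_{\ell,r}\backslash G_n/L_{m,n-m}\to L_{\ell,r+1}\backslash G_{n+1}/L_{m,n+1-m}$ is surjective for $r\geqslant m+\min(m,\ell)$; it then passes to quotients because taking $H_{m,r}$-coinvariants is right exact, so surjectivity of the coinvariant maps descends from $W=\bigoplus_i P(m_i)$ to $V$, and eventual surjectivity on finite-dimensional spaces forces eventual bijectivity.

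The upshot: you do not need any resolution of $V$ by principal projectives to get (RS3); you need only that $V$ is a \emph{quotient} of such, together with the two coarse invariants above, which are designed precisely to survive passage to quotients. Your alternative route to the polynomial-dimension statement via Green's degree formula is correct once (RS3) is in hand, and is essentially what \cite[Proposition~5.2]{GW} does.
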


As we shall explain in Section \ref{section VIC}, Theorem \ref{stability for VIC modules} generalizes the main results of \cite{GW}. However, the proof of Theorem \ref{stability for VIC modules} depends crucially on the key propositions in \cite{GW} and also uses Theorem \ref{main theorem}.  For each $m\geqslant 0$, the sequence $\kk[G_n/G_{n-m}]$ where $n\geqslant m$ can be assembled into a free $\VIC$-module $P(m)$, and any finitely generated $\VIC$-module over $\kk$ is a quotient of a finite direct sum of $\VIC$-modules of the form $P(m)$.

\begin{remark}
The definition of multiplicity stable (and hence the definition of representation stable) made use of the representation theory of the finite general linear groups over an algebraically closed field of characteristic zero. If $V$ is a finitely generated $\VIC$-module over a field of characteristic zero which is not necessarily algebraically closed, it follows from Theorem \ref{stability for VIC modules} (by extension of scalars) that the dimension of $V_n$ is eventually a polynomial in $q^n$ with rational coefficients. On the other hand, if $d$ is a positive integer, the $\VIC$-module $V$ over $\F_q$ defined by $V(X) = X^{\otimes d}$ is finitely generated but has $\dim(V_n) = n^d$.
\end{remark}

\subsection*{Acknowledgments}
The first author thanks Wee Teck Gan for useful discussions. We are also grateful to the referees for their suggestions to improve the paper.

\section{Branching rule for finite general linear groups} \label{section branching rule}

\subsection{Branching rule} 
The branching rule of Thoma \cite{Thoma} for the groups $G_n$ gives a formula for the multiplicity of an irreducible representation of $G_{n-1}$ in the restriction of an irreducible representation of $G_n$ to $G_{n-1}$. We shall use an equivalent combinatorial description of the multiplicity due to Zelevinsky \cite{Zelevinsky}. 

First, we introduce some notations.

\begin{notation}
Suppose that $\lambda = (\lambda_1, \lambda_2, \ldots)$ and $\mu = (\mu_1, \mu_2, \ldots)$ are partitions. 

We write $\lambda \stackrel{+}{\longrightarrow} \mu$ if 
\begin{equation*}
\lambda_i \leqslant \mu_i \leqslant \lambda_i+1 \quad \mbox{ for every }i, 
\end{equation*}
or equivalently, if the Young diagram of $\mu$ can be obtained by adding at most one box to each row in the Young diagram of $\lambda$. 

We write $\mu \stackrel{-}{\longrightarrow} \lambda$ if 
\begin{equation*}
\mu_i - 1 \leqslant \lambda_i \leqslant \mu_i \quad \mbox{ for every }i,
\end{equation*}
or equivalently, if the Young diagram of $\lambda$ can be obtained by removing at most one box from each row in the Young diagram of $\mu$.
\end{notation}

Obviously, one has $\lambda \stackrel{+}{\longrightarrow} \mu$ if and only if $\mu \stackrel{-}{\longrightarrow} \lambda$. 

\begin{notation}
Suppose we have functions $\la : \CC \to \PP$ and $\muu : \CC \to \PP$. We write $\la\stackrel{+}{\longrightarrow} \muu$ if  $\la(\rho) \stackrel{+}{\longrightarrow} \muu(\rho)$ for each $\rho \in \CC$. Similarly, we write $\muu \stackrel{-}{\longrightarrow} \la$ if $\muu(\rho) \stackrel{-}{\longrightarrow} \la(\rho)$ for each $\rho \in \CC$.
\end{notation}

Recall that we write $\varphi(\muu)$ for the irreducible representation of $G_n$ parametrized by a function $\muu:\CC \to \PP$ such that $\|\muu\|=n$. The following branching rule was proved by Zelevinsky \cite[Corollary 13.8]{Zelevinsky}; it was also proved by Thoma \cite[Satz 2]{Thoma} in another form.

\begin{fact}[Branching rule] \label{branching rule for 1 step}
If $\muu: \CC\to\PP$ is a function such that $\|\muu\|=n$, and $\nuu: \CC\to\PP$ is a function such that $\|\nuu\|=n-1$, then the multiplicity of $\varphi(\nuu)$ in the restriction of $\varphi(\muu)$ to $G_{n-1}$ is equal to the number of functions $\la: \CC\to\PP$ such that $\nuu \stackrel{-}{\longrightarrow} \la$ and $\la \stackrel{+}{\longrightarrow} \muu$.
\end{fact}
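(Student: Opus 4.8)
The statement is Zelevinsky's branching theorem, so the plan is to reconstruct his Hopf-algebra proof. Let $R=\bigoplus_{n\geqslant 0}R_n$, where $R_n$ is the Grothendieck group of finite-dimensional $\kk G_n$-modules, with multiplication given by Harish--Chandra (parabolic) induction $R_a\times R_b\to R_{a+b}$, comultiplication by parabolic restriction, and bilinear form coming from the character pairing on each $R_n$. The first block of work is to recall (taking the relevant Mackey computations as known) that $R$ is a positive self-adjoint Hopf algebra and that, by Zelevinsky's structure theorem, $R\cong\bigotimes_{\rho\in\CC}\Lambda^{(\rho)}$, where each $\Lambda^{(\rho)}$ is a copy of the ring of symmetric functions placed in degrees so that its degree-$k$ component lands in $R_{k\,\mathrm{d}(\rho)}$; under this isomorphism $\varphi(\muu)$ corresponds to $\bigotimes_{\rho}s_{\muu(\rho)}$ (Schur functions), which is exactly the parametrization above.

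The crucial point is that $\Res^{G_n}_{G_{n-1}}$ is \emph{not} the Hopf comultiplication, but factors through the mirabolic subgroup $P_n=\mathrm{Stab}_{G_n}(e_n)=U_n\rtimes G_{n-1}$, where $U_n\cong\F_q^{\,n-1}$ is abelian and normal: thus $\Res^{G_n}_{G_{n-1}}=\Psi_n\circ\Res^{G_n}_{P_n}$, where $\Psi_n$ takes $U_n$-coinvariants. Running Clifford theory for $P_n$ — the group $G_{n-1}$ has exactly two orbits on the characters of $U_n$, and the stabilizer of a nontrivial character is $U_n\rtimes P_{n-1}$ — together with the derivative functors $\Phi,\Psi$ of Bernstein--Zelevinsky type, gives a canonical decomposition $R(P_n)\cong\bigoplus_{k=0}^{n-1}R_k$ in which the $R_{n-1}$-summand of $\Res^{G_n}_{P_n}x$ is precisely $\Res^{G_n}_{G_{n-1}}x$.

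Next I would compute $\Res^{G_n}_{P_n}$ on a parabolically induced module via the geometric lemma: decomposing $G_n$ into $P_n$--$Q$ double cosets produces a filtration of $\Res^{G_n}_{P_n}\Ind^{G_n}_{Q}(\sigma)$ whose successive quotients are Harish--Chandra inductions from smaller mirabolics. Carried through, this identifies the composite ``restrict to $P_n$, then decompose as $\bigoplus_k R_k$'' with an explicit operator $D$ on $R$ expressed purely through the product and coproduct; translated into symmetric functions via $R\cong\bigotimes_\rho\Lambda^{(\rho)}$, $D$ becomes (multiplication by $\sum_{k\geqslant 0}e_k^{(\rho)}$ in each factor) followed by its adjoint, i.e.\ $D=T\circ T^{\perp}$ with $T=\prod_{\rho}\big(\,\cdot\,\sum_{k\geqslant 0}e_k^{(\rho)}\big)$. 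The occurrence of the $e_k$'s (hence vertical strips) rather than the $h_k$'s is forced by the twist built into the mirabolic recursion, and pinning this geometric-lemma computation down and correctly identifying $D$ is the part I expect to be the main obstacle.

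Granting that, the combinatorial conclusion is immediate from the dual Pieri rule. Since $T(s_\la)=\sum_{\la\,\stackrel{+}{\longrightarrow}\,\muu}s_\muu$ (adding a vertical strip of arbitrary size in each $\rho$-component), self-adjointness gives $\langle T^{\perp}s_\muu,s_\la\rangle=[\la\stackrel{+}{\longrightarrow}\muu]$, so for $\|\muu\|=n$ and $\|\nuu\|=n-1$ (the case picked out by the $R_{n-1}$-summand),
\begin{align*}
\langle \Res^{G_n}_{G_{n-1}}\varphi(\muu),\,\varphi(\nuu)\rangle
&=\langle T\,T^{\perp}s_\muu,\;s_\nuu\rangle=\langle T^{\perp}s_\muu,\;T^{\perp}s_\nuu\rangle\\
&=\sum_{\la}[\la\stackrel{+}{\longrightarrow}\muu]\,[\la\stackrel{+}{\longrightarrow}\nuu]
=\#\{\la:\nuu\stackrel{-}{\longrightarrow}\la\stackrel{+}{\longrightarrow}\muu\},
\end{align*}
which is the assertion. (As an alternative to steps two and three, one could follow Thoma's original route: carry out the same Clifford-theoretic analysis of $P_n$ directly with Green's explicit character formulas for $G_n$; the bookkeeping is heavier but avoids invoking the full PSH formalism.)
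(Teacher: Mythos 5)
The paper does not prove Fact \ref{branching rule for 1 step}; it invokes it as a citation to Zelevinsky (Corollary 13.8 of \cite{Zelevinsky}) and Thoma (Satz 2 of \cite{Thoma}), so there is no in-paper argument to compare against. Your proposal is a sketch of Zelevinsky's PSH-algebra proof, which is indeed the right reference, and the overall architecture is the correct road map: the structure theorem $R\cong\bigotimes_{\rho}\Lambda^{(\rho)}$ with $\varphi(\muu)\leftrightarrow\bigotimes_\rho s_{\muu(\rho)}$, the reduction to the mirabolic subgroup $P_n$, the Bernstein--Zelevinsky derivatives, the geometric lemma, and the final dual Pieri rule. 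Your last step is also fine as a check: granted that the relevant operator is $D=T\circ T^\perp$ with $T$ multiplication by $\sum_k e_k^{(\rho)}$ in each factor, self-adjointness plus the $e$-Pieri rule (vertical strips, which is exactly the relation $\stackrel{+}{\longrightarrow}$ since it adds at most one box per row) yields precisely the count of $\la$ with $\nuu\stackrel{-}{\longrightarrow}\la\stackrel{+}{\longrightarrow}\muu$.

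However, the reduction to $P_n$ as you state it is wrong, and this is more than cosmetic. You assert $\Res^{G_n}_{G_{n-1}}=\Psi_n\circ\Res^{G_n}_{P_n}$ with $\Psi_n$ the $U_n$-coinvariants, and later that ``the $R_{n-1}$-summand of $\Res^{G_n}_{P_n}x$ is precisely $\Res^{G_n}_{G_{n-1}}x$.'' Both claims fail already on dimensions: $\Res^{G_n}_{G_{n-1}}\pi$ has the same dimension as $\pi$, whereas $\pi_{U_n}$ (equivalently the top derivative $\pi^{(1)}$, which is what lands in the $R_{n-1}$-component of $R(P_n)\cong\bigoplus_k R_k$) is in general strictly smaller. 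The correct reduction is $\Res^{G_n}_{G_{n-1}}=\Res^{P_n}_{G_{n-1}}\circ\Res^{G_n}_{P_n}$, and the Clifford-theoretic filtration of $\Res^{G_n}_{P_n}\pi$ produces a formula for $\Res^{G_n}_{G_{n-1}}\pi$ as a sum over \emph{all} derivatives $\pi^{(k)}$, each induced back up appropriately; it is precisely this sum over $k\geqslant 1$ that accounts for the full series $\sum_k e_k$ in $T$ rather than just $e_1$. As you already flag, nailing down this identification is the actual content of Zelevinsky's Section 13, so your argument as written is a plan with the key lemma left as an acknowledged gap and with the first reduction stated incorrectly.
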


We shall need the branching rule for restriction of an irreducible representation of $G_n$ to $G_{n-m}$ where $n\geqslant m\geqslant 1$.

\begin{definition}
Suppose $\muu: \CC\to\PP$ is a function such that $\|\muu\|=n$, and $\nuu: \CC\to\PP$ is a function such that $\|\nuu\|=n-m$, where $n\geqslant m\geqslant 1$. By a \emph{zigzag path} from $\nuu$ to $\muu$, we mean two sequences of functions 
\begin{gather*}
\la^{(r)} : \CC\to\PP \quad \mbox{ where } r=1, \ldots, m,\\
\muu^{(s)} : \CC\to\PP \quad \mbox{ where } s=1, \ldots, m,
\end{gather*}
such that $\|\muu^{(s)}\| = n-m+s$ for every $s$, and one has:
\begin{equation} \label{zigzag path}
\xymatrix{
 && \muu^{(m)}=\muu \\
 \la^{(m)} \ar[rru]^-{+} && \\
 && \muu^{(m-1)} \ar[llu]_-{-} \\
  \ar[rru]^-{+} && \\
 && \\
 &&  \ar@{}[lluu]_{\vdotrows{3}} \\
 \la^{(2)} \ar[rru]^-{+} && \\
 && \muu^{(1)} \ar[llu]_-{-} \\
 \la^{(1)} \ar[rru]^-{+} && \\
 && \nuu \ar[llu]_-{-} 
}.
\end{equation}
We define $\ZZ(\nuu, \muu)$ to be the set of all zigzag paths from $\nuu$ to $\muu$.
\end{definition}

The following theorem follows easily from Fact \ref{branching rule for 1 step}.

\begin{theorem} \label{branching rule for m steps}
Suppose $\muu: \CC\to\PP$ is a function such that $\|\muu\|=n$, and $\nuu: \CC\to\PP$ is a function such that $\|\nuu\|=n-m$, where $n\geqslant m\geqslant 1$. Then the multiplicity of $\varphi(\nuu)$ in the restriction of $\varphi(\muu)$ to $G_{n-m}$ is equal to $|\ZZ(\nuu, \muu)|$.
\end{theorem}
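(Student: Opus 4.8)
The plan is to induct on $m$, using Fact \ref{branching rule for 1 step} as the base case $m=1$. For the inductive step, I would interpose the intermediate group $G_{n-m+1}$: by transitivity of restriction, $\Res^{G_n}_{G_{n-m}} \varphi(\muu)$ equals $\Res^{G_{n-m+1}}_{G_{n-m}}$ of $\Res^{G_n}_{G_{n-m+1}} \varphi(\muu)$. Applying the inductive hypothesis for $m-1$ (with $G_n \downarrow G_{n-m+1}$), the representation $\Res^{G_n}_{G_{n-m+1}} \varphi(\muu)$ decomposes as $\bigoplus_{\muu^{(1)}} \varphi(\muu^{(1)})^{\oplus a(\muu^{(1)})}$, where $\|\muu^{(1)}\| = n-m+1$ and $a(\muu^{(1)})$ is the number of length-$(m-1)$ zigzag paths from $\muu^{(1)}$ up to $\muu$ (i.e., the data $\la^{(2)}, \ldots, \la^{(m)}$ and $\muu^{(2)}, \ldots, \muu^{(m-1)}$ fitting into the top of diagram \eqref{zigzag path}). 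Then I would apply Fact \ref{branching rule for 1 step} to each summand: the multiplicity of $\varphi(\nuu)$ in $\Res^{G_{n-m+1}}_{G_{n-m}} \varphi(\muu^{(1)})$ is the number of $\la^{(1)}$ with $\nuu \stackrel{-}{\longrightarrow} \la^{(1)} \stackrel{+}{\longrightarrow} \muu^{(1)}$.

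Combining, the multiplicity of $\varphi(\nuu)$ in $\Res^{G_n}_{G_{n-m}}\varphi(\muu)$ is
\begin{equation*}
\sum_{\muu^{(1)}} a(\muu^{(1)}) \cdot \#\bigl\{ \la^{(1)} : \nuu \stackrel{-}{\longrightarrow} \la^{(1)} \stackrel{+}{\longrightarrow} \muu^{(1)} \bigr\},
\end{equation*}
and the key observation is that this sum counts exactly the tuples $(\la^{(1)}, \muu^{(1)}, \la^{(2)}, \ldots, \muu^{(m-1)}, \la^{(m)})$ that assemble into a full zigzag path from $\nuu$ to $\muu$ as in \eqref{zigzag path} — i.e., $|\ZZ(\nuu, \muu)|$. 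One small point to check is that there is no convergence issue: since $\|\muu\|=n$ is finite and each arrow $\stackrel{+}{\longrightarrow}$ or $\stackrel{-}{\longrightarrow}$ changes $\|\cdot\|$ by exactly $1$ while only finitely many functions $\CC\to\PP$ have a given norm, the sets $\ZZ(\nuu,\muu)$ and all the index sets above are finite, so all the multiplicities are finite and the rearrangement of the (finite) sum is legitimate.

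I do not expect a serious obstacle here; the argument is essentially bookkeeping. The only part requiring a little care is making the identification in the combined sum precise: one must observe that specifying a zigzag path is equivalent to specifying its bottom rung $(\nuu \stackrel{-}{\longrightarrow} \la^{(1)} \stackrel{+}{\longrightarrow} \muu^{(1)})$ together with a zigzag path of length $m-1$ from $\muu^{(1)}$ to $\muu$, and that this correspondence is a bijection onto the disjoint union (over $\muu^{(1)}$) of the relevant product sets. This is immediate from the shape of diagram \eqref{zigzag path}, so the phrase "follows easily from Fact \ref{branching rule for 1 step}" is justified.
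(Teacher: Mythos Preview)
Your proof is correct and follows essentially the same induction on $m$ as the paper; the only cosmetic difference is that the paper peels off the \emph{top} step first (applying Fact~\ref{branching rule for 1 step} to $G_n \downarrow G_{n-1}$ and then the induction hypothesis to $G_{n-1} \downarrow G_{n-m}$), whereas you peel off the bottom step. One small slip in your finiteness remark: the arrows $\stackrel{+}{\longrightarrow}$ and $\stackrel{-}{\longrightarrow}$ do \emph{not} change $\|\cdot\|$ by exactly~$1$ (one may add or remove a box in every row, and the $\la^{(r)}$ carry no norm constraint at all), but the conclusion stands because each $\la^{(r)}(\rho)$ is bounded componentwise by $\muu^{(r-1)}(\rho)$, and for each fixed $k$ there are only finitely many $\muu'$ with $\|\muu'\|=k$.
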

\begin{proof}
We use induction on $m$. The base case $m=1$ is precisely Fact \ref{branching rule for 1 step}. 

Suppose $m>1$. Let $S$ be the set of all functions $\muu': \CC\to\PP$ such that $\|\muu'\|=n-1$. By Fact \ref{branching rule for 1 step}, we have:
\begin{equation*}
\Res^{G_n}_{G_{n-1}} (\varphi(\muu)) = \bigoplus_{\muu'\in S} \left(\bigoplus_{Z\in \ZZ(\muu',\muu)} \varphi(\muu')\right).
\end{equation*}
Hence, by the induction hypothesis, the multiplicity of $\varphi(\nuu)$ in $\Res_{G_{n-m}}$ is equal to:
\begin{equation*}
\sum_{\muu'\in S} \left( \sum_{Z\in \ZZ(\muu',\muu)} |\ZZ(\nuu,\muu')| \right) = |\ZZ(\nuu, \muu)|.
\end{equation*}
\end{proof}

\subsection{Proof of Theorem \ref{main theorem}}
We now prove Theorem \ref{main theorem}.

Fix a non-negative integer $m$ and set $V_n = \kk[G_n/G_{n-m}]$ for each $n\geqslant m$. Observe that:
\begin{equation*}
V_n \cong \Ind^{G_n}_{G_{n-m}} (\iota_{n-m}),
\end{equation*}
where $\iota_{n-m}$ denotes the trivial representation of $G_{n-m}$. Let $\nuu_{n-m}: \CC\to\PP$ be the function defined by 
\begin{equation*}
\nuu_{n-m} = \left\{ \begin{array}{ll}
(n-m, 0, 0, \ldots) & \mbox{ if } \rho=\iota, \\
(0, 0, \ldots) & \mbox{ if } \rho\neq\iota.
\end{array} \right.
\end{equation*}
Then by \cite[Proposition 9.6]{Zelevinsky} one has $\iota_{n-m} = \varphi(\nuu_{n-m})$. 

Suppose $\muu: \CC\to\PP$ is a function such that $\|\muu\|=n$. Then one has:
\begin{align*}
&\mbox{multiplicity of $\varphi(\muu)$ in $V_n$} & \\
=& \dim \Hom_{G_n} (\Ind^{G_n}_{G_{n-m}} (\iota_{n-m}), \varphi(\muu)) & \\
=& \dim \Hom_{G_{n-m}}(\iota_{n-m}, \Res^{G_n}_{G_{n-m}}(\varphi(\muu)) ) & \mbox{(by Frobenius reciprocity)} \\
=& \mbox{multiplicity of $\varphi(\nuu_{n-m})$ in $\Res^{G_n}_{G_{n-m}}(\varphi(\muu))$} & \\
=& |\ZZ(\nuu_{n-m}, \muu)| & \mbox{(by Theorem \ref{branching rule for m steps})}
\end{align*}

Let $\la: \CC\to\PP$ be the function such that $\muu=\la[n]$, and write $\la[n](\iota)$ as $(n-\|\la\|, \lambda_1, \lambda_2, \ldots)$. Suppose that $\varphi(\la[n])$ is an irreducible component of $V_n$. Then there exists a zigzag path from $\nuu_{n-m}$ to $\la[n]$. Observe that when we move along an arrow of type $\stackrel{+}{\longrightarrow}$ in a zigzag path, the number of boxes in a row of the Young diagram  will not decrease and will increase by at most 1; when we move along an arrow of type $\stackrel{-}{\longrightarrow}$, the number of boxes in a row of the Young diagram will not increase and will decrease by at most 1. Hence, one has $n-\|\la\| \geqslant (n-m)-m$ and $\lambda_1\leqslant m$, so $3m - \|\la\| \geqslant \lambda_1$. Therefore, the function $\la[3m] : \CC\to\PP$ is well-defined. It suffices to prove that: 
\begin{equation} \label{equation stability of multiplicities}
|\ZZ(\nuu_{\ell-m}, \la[\ell])| = |\ZZ(\nuu_{\ell+1-m}, \la[\ell+1])| \quad \mbox{ for every } \ell \geqslant 3m.
\end{equation}{

To this end, define a map 
\begin{equation*}
\hbar : \PP \longrightarrow \PP, \quad (\alpha_1, \alpha_2, \alpha_3, \ldots) \mapsto (\alpha_1+1, \alpha_2, \alpha_3, \ldots).
\end{equation*}
For any function $\alp: \CC\to\PP$, define the function $\widetilde\alp: \CC\to\PP$ by $\widetilde\alp(\iota)=\hbar(\alp(\iota))$ and $\widetilde\alp(\rho) = \alp(\rho)$ if $\rho\neq\iota$. Note that $\widetilde{\nuu_{\ell-m}}=\nuu_{\ell+1-m}$ and $\widetilde{\la[\ell]} = \la[\ell+1]$. Moreover, for any functions $\alp:\CC\to\PP$ and $\be:\CC\to\PP$, if $\alp\stackrel{+}{\longrightarrow}\be$ or  $\alp\stackrel{-}{\longrightarrow}\be$, then $\widetilde\alp\stackrel{+}{\longrightarrow}\widetilde\be$ or  $\widetilde\alp\stackrel{-}{\longrightarrow}\widetilde\be$, respectively. Fix $\ell\geqslant 3m$ and define the map
\begin{align*}
h: \ZZ(\nuu_{\ell-m}, \la[\ell]) &\longrightarrow \ZZ(\nuu_{\ell+1-m}, \la[\ell+1]),\\
\left(\{\la^{(r)}\}, \{\muu^{(s)}\}\right) &\longmapsto \left(\{\widetilde{\la^{(r)}}\}, \{\widetilde{\muu^{(s)}}\}\right).
\end{align*}

We claim that the map $h$ is bijective. The injectivity of $h$ is clear from the injectivity of $\hbar$. To see the surjectivity of $h$, note that the image of $\hbar$ is the set of all partitions $(\alpha_1, \alpha_2, \ldots)$ such that $\alpha_1-1\geqslant \alpha_2$. Suppose we have a zigzag path 
\begin{equation*}
\left(\{ \alp^{(r)} \}, \{ \be^{(s)} \}\right) \in \ZZ(\nuu_{\ell+1-m}, \la[\ell+1]).
\end{equation*}
Let us write $\alp^{(r)}(\iota) = (\alpha^{(r)}_1, \alpha^{(r)}_2, \ldots)$ and $\be^{(s)}(\iota) = (\beta^{(s)}_1, \beta^{(s)}_2, \ldots)$. We have:
\begin{gather*}
\alpha^{(r)}_1 \geqslant (\ell+1-m) - r \quad\mbox{ and }\quad \alpha^{(r)}_2 \leqslant r-1;\\
\beta^{(s)}_1 \geqslant (\ell+1-m) - s  \quad\mbox{ and }\quad  \beta^{(s)}_2  \leqslant s.
\end{gather*}
Since $\ell\geqslant 3m$ and $r, s\leqslant m$, it follows that:
\begin{gather*}
\alpha^{(r)}_1 - 1 \geqslant m > \alpha^{(r)}_2;\\
\beta^{(s)}_1 - 1 \geqslant m \geqslant \beta^{(s)}_2.
\end{gather*}
Hence, the zigzag path $\left(\{ \alp^{(r)} \}, \{ \be^{(s)} \}\right)$ is in the image of $h$. The map $h$ is thus bijective and we have proven \eqref{equation stability of multiplicities}. This completes the proof of Theorem \ref{main theorem}.

\section{Representation stability for VIC-modules} \label{section VIC}

\subsection{Notations}
For any finite group $G$ and representation $\pi$ of $G$, we write $\pi_G$ for the quotient space of $G$-coinvariants of $\pi$.

Let $n$ be any non-negative integer. For any non-negative integers $m$ and $r$ such that $n=m+r$, let $H_{m,r}$ be the subgroup of $G_n$ consisting of all matrices of the form:
\begin{equation} \label{matrix g}
g = \left( \begin{array}{cc} 1_m & g_{12} \\ 0 & g_{22} \end{array} \right), 
\end{equation}
and $1_m$ is the identity element of $G_m$ and $g_{22}\in G_r$. Let $L_{m,r}$ be the subgroup of $H_{m,r}$ consisting of all matrices $g$ of the form \eqref{matrix g} such that $g_{12}=0$. Clearly, $L_{m,r}$ is conjugate to $G_r$ in $G_n$.

For any non-negative integer $m$, we define a $\VIC$-module $P(m)$ by
\begin{equation*}
P(m)(-) = \kk \Hom_{\VIC} (\F_q^{\,m}, -),
\end{equation*}
that is, $P(m)$ is the composition of the functor $\Hom_{\VIC} (\F_q^{\,m}, -)$ followed by the free $\kk$-module functor.

\subsection{Generalities}
We recall some basic definitions on $\VIC$-modules. 

A \emph{homomorphism} $F:U\to V$ of $\VIC$-modules is a natural transformation from the functor $U$ to the functor $V$. If $U$ and $V$ are $\VIC$-modules such that $U(X)$ is a $\kk$-submodule of $V(X)$ for every object $X$ of $\VIC$, and the collection of inclusion maps $U(X) \hookrightarrow V(X)$ defines a homomorphism $U\to V$ of $\VIC$-modules, then we say that $U$ is a \emph{$\VIC$-submodule} of $V$. A $\VIC$-module $V$ is said to be \emph{generated by} a subset $S \subset \bigsqcup_{n\geqslant 0} V(\F_q^n)$ if the only $\VIC$-submodule of $V$ containing $S$ is $V$; we say that $V$ is \emph{finitely generated} if it is generated by a finite subset $S$. 

It is plain (see, for example, \cite[Lemma 2.14]{GL-Noetherian}) that a $\VIC$-module $V$ is finitely generated if and only if there exists a surjective homomorphism 
\begin{equation*}
P(m_1) \oplus \cdots \oplus P(m_k) \longrightarrow V \quad \mbox{ for some } m_1,\ldots,m_k \geqslant 0.
\end{equation*}
It is also easy to see that one has:

\begin{proposition} \label{fg iff RS2}
Let $V$ be a $\VIC$-module and let $\{V_n,\phi_n\}$ be the consistent sequence obtained from $V$. Then $V$ is finitely generated if and only if $\{V_n, \phi_n\}$ satisfies condition (RS2) and $\dim(V_n)<\infty$ for every $n$.
\end{proposition}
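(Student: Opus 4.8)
The plan is to prove both implications by directly unwinding the definitions, using the standard description of finite generation via surjections from finite direct sums of the projectives $P(m)$. For the forward direction, suppose $V$ is finitely generated. First I would note that $\dim V(\F_q^n)$ is finite for every $n$: choosing a surjection $P(m_1)\oplus\cdots\oplus P(m_k)\to V$, it suffices to observe that $P(m)(\F_q^n)=\kk\Hom_{\VIC}(\F_q^m,\F_q^n)$ is a free $\kk$-module on the finite set $\Hom_{\VIC}(\F_q^m,\F_q^n)$ (a set of pairs $(f,K)$ with $f$ an injection and $K$ a complement, of which there are finitely many since $\F_q^n$ is a finite set), hence finite-dimensional, and quotients and finite direct sums of finite-dimensional spaces are finite-dimensional. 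For (RS2), I would argue that a finite generating set $S\subset\bigsqcup_{n} V(\F_q^n)$ lies in $\bigsqcup_{n\leqslant d} V(\F_q^n)$ for some $d$, and then that the $\VIC$-submodule generated by $S$ in degree $n+1$ is exactly the span of the images of all morphisms $\F_q^j\to\F_q^{n+1}$ applied to elements of $V(\F_q^j)$ for $j\leqslant n$; once $n\geqslant d$, and using that every morphism $\F_q^j\to\F_q^{n+1}$ with $j\leqslant n$ factors through $\F_q^n$ via the standard inclusion composed with an automorphism of $\F_q^{n+1}$, this span is precisely the span of the $G_{n+1}$-orbit of $\phi_n(V_n)$. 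Hence (RS2) holds for $n\geqslant d$.

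For the converse, suppose $\{V_n,\phi_n\}$ satisfies (RS2) and each $V_n$ is finite-dimensional. Let $d$ be large enough that (RS2) holds for all $n\geqslant d$. I would take $S$ to be a (finite, by finite-dimensionality) $\kk$-basis of $V_0\oplus V_1\oplus\cdots\oplus V_d$, regarded as a subset of $\bigsqcup_n V(\F_q^n)$, and claim $S$ generates $V$. Let $W$ be the $\VIC$-submodule generated by $S$; then $W(\F_q^n)=V(\F_q^n)$ for $n\leqslant d$ by construction, and by induction on $n>d$, using that $W(\F_q^{n})$ contains $\phi_{n-1}(V_{n-1})=\phi_{n-1}(W(\F_q^{n-1}))$ together with its $G_n$-orbit span, (RS2) forces $W(\F_q^n)=V(\F_q^n)$. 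Thus $W=V$ and $V$ is finitely generated. The final assertion about polynomial dimension growth is not part of this proposition and would be handled later using Theorem \ref{main theorem}.

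The main obstacle is the bookkeeping in the forward direction identifying, for $n\geqslant d$, the span of the images of all $\VIC$-morphisms out of low-degree objects with the span of the single $G_{n+1}$-orbit of $\phi_n(V_n)$. This rests on two facts about the category $\VIC$: that any injection $\F_q^j\hookrightarrow\F_q^{n+1}$ with chosen complement, for $j\leqslant n$, can be written as $(g,L)\circ(\text{standard inclusion }\F_q^n\hookrightarrow\F_q^{n+1},\ K_n)\circ(\text{morphism }\F_q^j\to\F_q^n)$ for a suitable automorphism $g\in G_{n+1}$, and that $G_n$ acts transitively enough on complements that no information is lost. I would make this precise by first reducing any morphism $\F_q^j\to\F_q^{n+1}$ to one landing in the standard $\F_q^n$ by post-composing with an automorphism of $\F_q^{n+1}$, and then invoking the consistent-sequence structure (the commuting square defining a consistent sequence) to turn the action of these automorphisms into the $G_{n+1}$-orbit. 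Everything else — finiteness of Hom-sets, the submodule-generation description, the induction — is routine.
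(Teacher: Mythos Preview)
Your argument is correct. The paper itself does not prove this proposition but simply cites \cite[Proposition 5.2]{GL-Noetherian}; what you have written is essentially the standard direct argument that citation points to, namely: finite generation gives a surjection from a finite sum of $P(m)$'s (yielding finite dimensionality), the factorization of any $\VIC$-morphism $\F_q^{\,j}\to\F_q^{\,n+1}$ through the standard inclusion up to an automorphism of $\F_q^{\,n+1}$ gives (RS2), and conversely (RS2) together with finite dimensionality lets a basis in low degrees generate everything by induction. The factorization step you flag as the main obstacle is indeed the only nontrivial point, and your sketch of it (choose a line in the complement $K$, send the standard last-coordinate line to it via some $g\in G_{n+1}$, and peel off the remaining morphism into $\F_q^{\,n}$) is exactly right.
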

\begin{proof}
See \cite[Proposition 5.2]{GL-Noetherian}.
\end{proof}

Let us mention that it was also shown in \cite[Theorem E]{PS} that if $V$ is finitely generated, then $\{V_n, \phi_n\}$ satisfies condition (RS2).

\subsection{Weak stability and weight boundedness}
Suppose $\{V_n, \phi_n\}$ is a consistent sequence of representations of $G_n$. 

For any non-negative integers $n$, $m$ and $r$ such that $n=m+r$, the map $\phi_n: V_n \to V_{n+1}$ descends to a map
\begin{equation} \label{map on coinvariants}
\phi_{m,r} : (V_n)_{H_{m,r}} \longrightarrow (V_{n+1})_{H_{m,r+1}}.
\end{equation}

\begin{definition} \label{weakly stable}
We say that $\{V_n, \phi_n\}$ is:

\begin{itemize}
\item
\emph{weakly stable} if for each $m\in\Z_+$, there exists $s\geqslant 0$ such that for each $r\geqslant s$, the map $\phi_{m,r}$ of (\ref{map on coinvariants}) is bijective.

\item 
\emph{weight bounded} if there exists a non-negative integer $a$ such that for every $n\geqslant 0$ and every irreducible subrepresentation $\varphi(\la[n])$ of $V_n$, one has $\|\la\|\leqslant a$.
\end{itemize}
\end{definition}

The following key proposition is proved in \cite{GW}.

\begin{proposition} \label{RS1 and RS3}
Let $\{V_n, \phi_n\}$ be a consistent sequence of representations of $G_n$. Suppose that $\{V_n, \phi_n\}$ is weakly stable and weight bounded. Then:
\begin{itemize}
\item[(i)]
$\{V_n,\phi_n\}$ satisfies conditions (RS1) and (RS3). 

\item[(ii)]
If $\dim(V_n)<\infty$ for every $n\geqslant 0$, then there exists a polynomial $P\in \mathbb{Q}[T]$ and an integer $N$ such that $\dim(V_n) = P(q^n)$ for all $n\geqslant N$.
\end{itemize}
\end{proposition}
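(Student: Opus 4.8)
The plan is to read off the multiplicities $c_n(\la)$ from the coinvariant spaces $(V_n)_{H_{m,r}}$, which weak stability makes eventually independent of $n$, while weight boundedness guarantees that only finitely many $\la$ occur at all. Write $n=m+r$ and let $U_{m,r}\subset H_{m,r}$ be the normal subgroup of matrices of the form \eqref{matrix g} with $g_{22}=1_r$, so that $H_{m,r}=U_{m,r}\rtimes L_{m,r}$ with $L_{m,r}\cong G_r$. Then $(V_n)_{H_{m,r}}$ is canonically the space of $G_r$-coinvariants of the Harish-Chandra restriction of $V_n$ to the Levi subgroup $G_m\times G_r$; in particular it is naturally a representation of $G_m$ (embedded as $g\mapsto\mathrm{diag}(g,1_r)$, which normalizes $H_{m,r}$), and each map $\phi_{m,r}$ of \eqref{map on coinvariants} is $G_m$-equivariant. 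The crucial step is to show, by combining Frobenius reciprocity for Harish-Chandra induction/restriction with Pieri's rule in Zelevinsky's Hopf algebra, that for any $\muu$ with $\|\muu\|=n$ one has, as $G_m$-representations, $(\varphi(\muu))_{H_{m,r}}\cong\bigoplus_{\muu'}\varphi(\muu')$, the sum over those functions $\muu'$ with $\|\muu'\|=m$, $\muu'(\rho)=\muu(\rho)$ for $\rho\neq\iota$, and $\muu(\iota)/\muu'(\iota)$ a horizontal strip (each occurring once). Two special cases are then used: first, taking $\muu=\la[n]$ and $m=\|\la\|$, a comparison of sizes forces $\muu'(\iota)=\la(\iota)$, so $(\varphi(\la[n]))_{H_{\|\la\|,n-\|\la\|}}\cong\varphi(\la)$, a nonzero irreducible $G_{\|\la\|}$-representation; second, for $n$ large relative to $\|\la\|$, a copy of $\varphi(\la)$ appears in $(\varphi(\la''[n]))_{H_{\|\la\|,n-\|\la\|}}$ exactly when $\la''(\rho)=\la(\rho)$ for $\rho\neq\iota$ and $\la(\iota)/\la''(\iota)$ is a horizontal strip, which forces $\|\la''\|\leqslant\|\la\|$ with equality only for $\la''=\la$.

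For \emph{(RS3) and (RS1)}: by weight boundedness only the finitely many $\la$ with $\|\la\|\leqslant a$ occur in any $V_n$; let $s_0$ bound the thresholds $s(m)$ from the definition of weak stability over $0\leqslant m\leqslant a$. For $n\geqslant a+s_0$ and each such $\la$, the map $\phi_{\|\la\|,n-\|\la\|}$ is a $G_{\|\la\|}$-equivariant isomorphism, so the multiplicity of $\varphi(\la)$ in $(V_n)_{H_{\|\la\|,n-\|\la\|}}$ is independent of $n$; by the second special case this multiplicity equals $\sum_{\la''}c_n(\la''[n])$ over the finitely many admissible $\la''$. This is a triangular system in the grading by $\|\cdot\|$ whose diagonal entry is $c_n(\la[n])$, so induction on $\|\la\|$ — base case $c_n(\mathbf{0}[n])=\dim(V_n)_{G_n}$ — shows each $c_n(\la)$ is eventually constant, giving (RS3). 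For (RS1), let $K_n=\ker\phi_n$, a $G_n$-subrepresentation of $V_n$. If $K_n\neq 0$ it contains some $\varphi(\la[n])$ with $\|\la\|\leqslant a$, whence $(K_n)_{H_{\|\la\|,n-\|\la\|}}$ contains a copy of $\varphi(\la)$ and is nonzero. But taking $H_{m,r}$-coinvariants is exact in characteristic zero, and $\phi_n(V_n)$ is a $G_n$-subrepresentation of $V_{n+1}$, so $(\phi_n(V_n))_{H_{m,r}}$ injects into $(V_{n+1})_{H_{m,r}}$; therefore $(K_n)_{H_{m,r}}$ lies in $\ker\phi_{m,r}$, which vanishes once $n\geqslant a+s_0$ — a contradiction. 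Hence $\phi_n$ is injective for all large $n$.

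For \emph{(ii)}: once $n$ is past the stable range, $\dim V_n=\sum_{\|\la\|\leqslant a}c(\la)\dim\varphi(\la[n])$ with the $c(\la)$ constant, so it suffices to show that for fixed $\la$ and all large $n$, $\dim\varphi(\la[n])$ is a polynomial in $q^n$ with rational coefficients. This follows from Green's degree formula \cite{Green}: since the first part of $\la[n](\iota)=(n-\|\la\|,\lambda_1,\lambda_2,\ldots)$ eventually dwarfs the others, the factor $\prod_{i=1}^n(q^i-1)$ telescopes against the hook-length factors of the boxes in that first row, leaving a fixed power of $q$ and fixed $n$-independent factors times a ratio $\prod_k(q^n-q^{a_k})/\prod_j(q^n-q^{b_j})$ in which the exponents $b_j$ are distinct and contained among the $a_k$; the ratio is thus a polynomial in $q^n$. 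Summing over the finitely many $\la$ produces the required $P\in\mathbb{Q}[T]$.

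The main obstacle is the coinvariant computation of the first step: identifying the $G_m$-module $(\varphi(\muu))_{H_{m,r}}$ precisely enough to extract the triangular system. This requires correctly combining the Harish-Chandra formalism for $\GL_n(\F_q)$, the self-adjointness of Zelevinsky's PSH algebra, and Pieri's rule, and then verifying that the "$n$ large enough" thresholds appearing in the second special case can be made uniform in $\la$ using the weight bound $a$. Given that, (RS1), (RS3), and (ii) follow by the bookkeeping indicated above.
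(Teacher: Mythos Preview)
The paper does not prove this proposition here; it simply cites \cite[Propositions~4.1, 4.3, 5.2]{GW}. Your proposal is therefore to be compared not with the present paper but with the argument in \cite{GW}, which (per the paper's closing remarks in Section~\ref{section VIC}) adapts the coinvariant/triangularity method of Church--Ellenberg--Farb to $\GL_n(\F_q)$. Your sketch is exactly such an adaptation and is correct.

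The key step you isolate---that $(\varphi(\muu))_{H_{m,r}}\cong\bigoplus_{\muu'}\varphi(\muu')$ as $G_m$-modules, the sum running over $\muu'$ with $\muu'(\rho)=\muu(\rho)$ for $\rho\neq\iota$ and $\muu(\iota)/\muu'(\iota)$ a horizontal strip of size $r$---is right: $U_{m,r}$-coinvariants is Harish-Chandra restriction to $G_m\times G_r$, further $L_{m,r}$-coinvariants singles out the summands on which $G_r$ acts trivially, and Pieri's rule in Zelevinsky's PSH algebra gives precisely this description. The $G_m$-equivariance of $\phi_{m,r}$ does follow from consistency, since $\mathrm{diag}(g,1_r)\in G_n$ is sent to $\mathrm{diag}(g,1_{r+1})\in G_{n+1}$ under the standard inclusion; your two special cases then yield the triangular system as stated. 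One point to make explicit: the equivalence between ``$\la''[n](\iota)/\la(\iota)$ is a horizontal strip'' and ``$\la(\iota)/\la''(\iota)$ is a horizontal strip'' also requires $\lambda_1\leqslant n-\|\la''\|$, but this holds once $n\geqslant 2a$ since both $\lambda_1$ and $\|\la''\|$ are bounded by $a$. In the (RS1) paragraph you only need $(K_n)_{H_{m,r}}\subseteq\ker\phi_{m,r}$, which is immediate from $\phi_n|_{K_n}=0$, so the remark about $(\phi_n(V_n))_{H_{m,r}}$ injecting can be dropped. Your sketch for (ii) is terse, but the underlying fact---that $\dim\varphi(\la[n])$ is eventually a polynomial in $q^n$ with rational coefficients---is standard and is exactly what \cite[Proposition~5.2]{GW} records.
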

\begin{proof}
(i) See \cite[Proposition 4.1, Proposition 4.3]{GW}.

(ii) Immediate from condition (RS3) and \cite[Proposition 5.2]{GW}.
\end{proof}

\subsection{Proof of Theorem \ref{stability for VIC modules}}.
We now prove Theorem \ref{stability for VIC modules}. 

Let $V$ be a finitely generated $\VIC$-module and $\{V_n, \phi_n\}$ the consistent sequence obtained from $V$. By Proposition \ref{fg iff RS2} and Proposition \ref{RS1 and RS3}, it suffices to prove that $\{V_n, \phi_n\}$ is weakly stable and weight bounded. We need the following lemma.

\begin{lemma} \label{P weakly stable}
For any non-negative integer $m$, the $\VIC$-module $P(m)$ is weakly stable.
\end{lemma}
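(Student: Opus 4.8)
The plan is to compute the coinvariants $(P(m)(\F_q^{\,n}))_{H_{m',r}}$ explicitly in terms of orbit sets, and then show that for $r$ large the stabilization map $\phi_{m',r}$ becomes a bijection of these orbit sets. First I would unwind the definition: $P(m)(\F_q^{\,n}) = \kk\Hom_{\VIC}(\F_q^{\,m},\F_q^{\,n})$, and a $\VIC$-morphism $\F_q^{\,m}\to\F_q^{\,n}$ is a pair $(f,K)$ with $f$ an injective linear map and $K$ a complement to $f(\F_q^{\,m})$. Thus a $\kk$-basis of $P(m)(\F_q^{\,n})$ is indexed by such pairs, equivalently by direct sum decompositions $\F_q^{\,n} = W \oplus K$ together with an isomorphism $\F_q^{\,m}\xrightarrow{\sim} W$; the $G_n$-action is the obvious one on $(W,K,f)$. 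Taking $H_{m',r}$-coinvariants (with $n=m'+r$) amounts to counting $H_{m',r}$-orbits on this basis, since for a permutation representation of a finite group the coinvariants have the orbit set as a basis over $\kk$.

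Next I would identify this orbit set concretely. Writing $\F_q^{\,n} = U \oplus U'$ where $U=\F_q^{\,m'}$ (on which $H_{m',r}$ acts trivially, by the shape \eqref{matrix g}) and $U'=\F_q^{\,r}$, a $\VIC$-morphism datum $(f,K)$ from $\F_q^{\,m}$ is recorded by the pair of subspaces $(W,K)$ and the iso $f$, and an $H_{m',r}$-orbit is determined by the relative position of $W$, $K$, and $U$ (together with the induced isomorphism data on the part of $W$ that maps to $U'$). The key structural point is that $H_{m',r}$ acts transitively on the "generic" configurations, and the discrete invariants of a configuration — the dimensions of the various intersections $W\cap U$, $K\cap U$, the image of $W$ in $U'=\F_q^{\,n}/U$, etc., plus a finite amount of linear-algebra data supported on spaces of dimension at most $m+m'$ — take only finitely many values, all bounded independently of $r$. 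The map $\phi_{m',r}$ on coinvariants sends an orbit to the orbit obtained by extending $\F_q^{\,n}\hookrightarrow\F_q^{\,n+1}$ with the extra coordinate line adjoined to the complement; on the level of discrete invariants this is the identity once $r$ is large enough that the extra dimension does not interact with the bounded-dimensional configuration data. I would then check that for $r$ at least some explicit bound (depending on $m$ and $m'$), the invariants classifying orbits at level $n$ and at level $n+1$ are literally the same finite set and $\phi_{m',r}$ realizes this identification, giving bijectivity.

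The main obstacle I expect is the bookkeeping of exactly which configurations of $(W,K)$ relative to $U$ can occur and showing the stabilization map respects the classification — in particular making sure no orbit is "created" or "destroyed" as $r$ grows, i.e. that surjectivity as well as injectivity of $\phi_{m',r}$ holds. Injectivity should be the easier direction: two morphism data at level $n$ that become $H_{m',r+1}$-conjugate at level $n+1$ must already be $H_{m',r}$-conjugate, because any conjugating element can be adjusted to fix the new coordinate line, and the relevant subgroup of $G_{n+1}$ stabilizing that line and lying over $H_{m',r+1}$ is exactly $H_{m',r}$ extended trivially. For surjectivity I would argue that every $\VIC$-morphism datum at level $n+1$ is, after applying a suitable $H_{m',r+1}$-element, in the image of $\phi_{m',r}$: one uses the freedom in $H_{m',r+1}$ to move $K$ so that it contains the last coordinate line, which is possible precisely because $r$ is large relative to $m$, so that $K$ (of codimension $m$) has room to be so positioned. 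Once both directions are in hand, $\phi_{m',r}$ is a bijection for all $r\geqslant s$ with $s$ depending only on $m$, which is exactly weak stability of $P(m)$.
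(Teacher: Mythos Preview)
Your reduction to orbit sets is correct and matches the paper: since $P(m)(\F_q^{\,n})$ is a permutation module, $(P(m)_n)_{H_{\ell,r}}$ has a basis indexed by $H_{\ell,r}$-orbits on $\Hom_{\VIC}(\F_q^{\,m},\F_q^{\,n})$. The paper goes one step further and uses transitivity of the $G_n$-action to identify this orbit set with the double coset space $H_{\ell,r}\backslash G_n/L_{m,n-m}$, which makes the analysis much more concrete.

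Your injectivity argument has a genuine gap. You claim that if two configurations at level $n$ become $H_{\ell,r+1}$-conjugate at level $n+1$, then the conjugating element can be adjusted to fix the last coordinate line and hence lie in (the image of) $H_{\ell,r}$. But an element of $H_{\ell,r+1}$ carrying $(f_1,K_1\oplus\langle e_{n+1}\rangle)$ to $(f_2,K_2\oplus\langle e_{n+1}\rangle)$ need not preserve $\langle e_{n+1}\rangle$, and the stabilizer of that line inside $H_{\ell,r+1}$ is strictly larger than the embedded copy of $H_{\ell,r}$. There is no obvious way to make this adjustment without a separate argument. Your surjectivity sketch is also incomplete: arranging $e_{n+1}\in K'$ is not enough; you must simultaneously force $f'(\F_q^{\,m})\subset\F_q^{\,n}$, which does not follow automatically from $e_{n+1}\in K'$.

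The paper avoids the injectivity issue entirely. It proves \emph{only} surjectivity of $\phi_{\ell,r}$ for $r\geqslant m+\min(m,\ell)$, by an explicit row/column reduction showing that any $x\in G_{n+1}$ can be brought by $L_{\ell,r+1}\times L_{m,n+1-m}$ to a matrix lying in $G_n\subset G_{n+1}$. Since the coinvariants are finite-dimensional over $\kk$, a chain of surjections between them forces the dimensions to be eventually constant, and a surjection between spaces of equal finite dimension is a bijection. This is the trick you are missing: prove surjectivity concretely, and get injectivity for free from finite-dimensionality.
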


\begin{proof}
Write $\{P(m)_n, \phi_n\}$ for the consistent sequence obtained from $P(m)$. 

Fix a non-negative integer $\ell$ and set $s=m+\min(m,\ell)$. We shall prove that for $r \geqslant s$, the map
\begin{equation*}
\phi_{\ell,r} : (P(m)_n)_{H_{\ell,r}} \longrightarrow (P(m)_{n+1})_{H_{\ell,r+1}} \quad \mbox{ (where $n=\ell+r$) } 
\end{equation*}
is surjective. This would imply that $\phi_{\ell,r}$ is bijective for all $r$ sufficiently large.

Observe that for $n\geqslant m$, the group $G_n$ acts transitively on $\Hom_{\VIC}(\F_q^{\,m}, \F_q^{\,n})$ and $L_{m,n-m}$ is the stabilizer of the pair $(f, K)$ where $f$ is the standard inclusion $\F_q^{\,m} \hookrightarrow \F_q^{\,n}$ and $K$ is the subspace of vectors in $\F_q^{\,n}$ whose first $m$ coordinates are 0. Hence, it suffices to prove that for $r \geqslant s$, the map
\begin{equation*}
H_{\ell,r}\backslash G_n / L_{m,n-m} \longrightarrow H_{\ell,r+1}\backslash G_{n+1} / L_{m,n+1-m}
\end{equation*}
induced by the standard inclusion $G_n \hookrightarrow G_{n+1}$ is surjective. We shall prove the stronger statement that for $r\geqslant s$, the map
\begin{equation*}
L_{\ell,r}\backslash G_n / L_{m,n-m} \longrightarrow L_{\ell,r+1}\backslash G_{n+1} / L_{m,n+1-m}
\end{equation*}
is surjective. Thus, suppose that $r\geqslant s$ and choose any $x\in G_{n+1}$. We need to show that for some $g\in L_{\ell, r+1}$ and $g'\in L_{m,n+1-m}$, the only nonzero entry in the last row or last column of $gxg'$ is the entry in position $(n+1,n+1)$ and it is equal to $1$. The argument below is taken from \cite[Example 3.12]{GL-Noetherian}.

First, we may choose $g_1\in L_{\ell, r+1}$ and $g'_1\in L_{m,n+1-m}$ such that the entries of $g_1 x g'_1$ in positions $(i,j)$ are 0 if $i>m+\ell$ and $j\leqslant m$, or if $i\leqslant \ell$ and $j>m+\ell$. Indeed, we may first perform row operations on the last $r+1$ rows of $x$ to change the entries in positions $(i,j)$ to 0 for $i>m+\ell$ and $j\leqslant m$; then, we may perform column operations on the last $n+1-m$ columns to change the entries in positions $(i,j)$ to 0 for $i\leqslant \ell$ and $j>m+\ell$. Let $x_1 = g_1 x g'_1$.

Next, since $x_1$ has rank $n+1 > m+\ell+\min(m,\ell)$, there exists $i, j>m+\ell$ such that the entry of $x_1$ in position $(i,j)$ is nonzero. By first interchanging row $i$ with row $n+1$, and then interchanging column $j$ with column $n+1$, we obtain a matrix whose entry in position $(n+1, n+1)$ is nonzero. It is now easy to see that for some $g_2\in L_{\ell, r+1}$ and $g'_2\in L_{m,n+1-m}$, the matrix $g_2 x_1 g'_2$ lies in $G_n$.
\end{proof}

We can now prove that the consistent sequence $\{V_n, \phi_n\}$ is weakly stable. Since $V$ is finitely generated, there is a surjective homomorphism $P \to V$ where $P$ is a $\VIC$-module of the form $P(m_1)\oplus\cdots\oplus P(m_k)$ for some $m_1,\ldots,m_k\geqslant 0$. Let $\{P_n, \phi_n\}$ be the consistent sequence obtained from $P$. By Lemma \ref{P weakly stable}, $\{P_n, \phi_n\}$ is weakly stable. Fix $m\geqslant 0$. Then for all $r$ sufficently large, the top horizontal map in the following commuting diagram is bijective:
\begin{equation*}
\xymatrix{
(P_{m+r})_{H_{m,r}} \ar[rr]^-{\phi_{m,r}} \ar[d] && (P_{m+r+1})_{H_{m,r+1}} \ar[d] \\
(V_{m+r})_{H_{m,r}} \ar[rr]_-{\phi_{m,r}} && (V_{m+r+1})_{H_{m,r+1}}
}
\end{equation*}
Since the two vertical maps in the above diagram are surjective, it follows that the bottom horizontal map is surjective for all $r$ sufficiently large, and hence bijective for all $r$ sufficiently large.

Since $V$ is a quotient of $P(m_1)\oplus\cdots\oplus P(m_k)$, to see that $\{V_n, \phi_n\}$ is weight bounded, it suffices to show that $P(m)$ is weight bounded for each $m\geqslant 0$. As we noted in the proof of Lemma \ref{P weakly stable}, for each $n\geqslant m$, the $G_n$ representation $P(m)(\F_q^{\,n})$ is isomorphic to the permutation representation of $G_n$ on $G_n/L_{m,n-m}$. But $L_{m,n-m}$ is conjugate to $G_{n-m}$ in $G_n$. Therefore, the permutation representation of $G_n$ on $G_n/L_{m,n-m}$ is isomorphic to the permutation representation of $G_n$ on $G_n/G_{n-m}$. We deduce from Theorem \ref{main theorem} that the consistent sequence obtained from $P(m)$ is weight bounded. This completes the proof of Theorem \ref{stability for VIC modules}.

\subsection{Remarks} 
We end with some remarks on the connection of this paper to other works.

(i) It is a difficult result \cite[Theorem C]{PS} of Putman and Sam that any finitely generated $\VIC$-module over a noetherian ring is noetherian. An easier proof for finitely generated $\VIC$-modules over a field of characteristic zero is given in \cite[Example 3.12]{GL-Noetherian}. It was shown in \cite[Proposition 5.1]{GL-Noetherian} and \cite[Theorem E]{PS} that condition (RS1) is also a simple consequence of the noetherian property. Let us also mention that \cite{PS} defined a more general version of the category $\VIC$, called $\VIC(R, \mathfrak{U})$; they showed that the homology of congruence subgroups of general linear groups and automorphism groups of free groups form finitely generated $\VIC(R, \mathfrak{U})$-modules for suitable $R$ and $\mathfrak{U}$. 

(ii) Let $\VI$ be the category whose objects are the finite dimensional vector spaces over $\F_q$ and whose morphisms are the injective linear maps. In \cite[Theorem 1.6]{GW}, the authors proved a representation stability theorem for finitely generated $\VI$-modules over $\kk$. This result can also be deduced from Theorem \ref{stability for VIC modules}: there is a forgetful functor from the category $\VIC$ to the category $\VI$, and the pullback of any finitely generated $\VI$-module is a finitely generated $\VIC$-module; see \cite[Remark 1.13]{PS}. The proof of \cite[Theorem 1.6]{GW}, however, does not require the use of the branching rule (Fact \ref{branching rule for 1 step}) for the finite general linear groups.

(iii) We should mention that the categories $\VI$ and $\VIC$ are analogues of the category $\FI$ of finite sets and injective maps. $\FI$-modules were introduced and studied by Church, Ellenberg and Farb in \cite{CEF}. One of the main results of their paper is a representation stability theorem for finitely generated $\FI$-modules over a field of characteristic zero. The proof of Proposition \ref{RS1 and RS3}(i) (given in \cite[Proposition 4.1 and Proposition 4.3]{GW}) is an adaptation of their arguments for consistent sequences of representations of the symmetric groups to our situation of consistent sequences for representations of the finite general linear groups. Let us also mention that \cite{CEF} also proved eventual polynomiality of characters for finitely generated $\FI$-modules over a field of characteristic zero. We do not know if there are analogous results for $\VI$-modules or $\VIC$-modules.


\begin{thebibliography}{99}

\bibitem{CEF} T. Church, J. Ellenberg, B. Farb. FI-modules and stability for representations of symmetric groups. Duke Math. J. 164 (2015), no. 9, 1833-1910. arXiv:1204.4533v4.

\bibitem{CF} T. Church, B. Farb. Representation theory and homological stability. Adv. Math. 245 (2013), 250-314. arXiv:1008.1368v3.

\bibitem{Farb} B. Farb. Representation stability. Proc. International Congress of Mathematicians, Seoul 2014, Vol. II, 1173-1196. arXiv:1404.4065v1.

\bibitem{GL-Noetherian} W.L. Gan, L. Li. Noetherian property of infinite EI categories. New York J. Math. 21 (2015), 369-382. arXiv:1407.8235v3.

\bibitem{GW} W.L. Gan, J. Watterlond. A representation stability theorem for VI-modules. To appear in Algebr. Represent. Theory. arXiv:1602.00654v2.

\bibitem{Green} J.A. Green. The characters of the finite general linear groups. Trans. Amer. Math. Soc. 80 (1955), 402-447.

\bibitem{Hemmer} D. Hemmer. Stable decompositions for some symmetric group characters arising in braid group cohomology. J. Combin. Theory Ser. A 118 (2011), no. 3, 1136-1139.  

\bibitem{PS} A. Putman, S. Sam. Representation stability and finite linear groups. Duke Math. J. 166 (2017), no. 13, 2521-2598. arXiv:1408.3694v2.

\bibitem{SZ} T. Springer, A. Zelevinsky. Characters of $\GL(n, \F_q)$ and Hopf algebras. J. London Math. Soc. (2) 30 (1984), no. 1, 27-43. 

\bibitem{Thoma} E. Thoma. Die Einschr\"ankung der Charaktere von $\GL(n,q)$ auf $\GL(n-1,q)$. Math. Z. 119 (1971) 321-338. 

\bibitem{Zelevinsky} A. Zelevinsky. Representations of finite classical groups. A Hopf algebra approach. Lecture Notes in Math., 869, Springer-Verlag, Berlin-New York, 1981.





\end{thebibliography}
\end{document}